\documentclass[12pt, a4paper, final]{article}
\usepackage{amsmath,amsthm,amsfonts,amssymb}
\usepackage{graphicx}
\usepackage{color}
\usepackage{url}
\usepackage{hyperref}
\usepackage{float}
\usepackage{import}
\usepackage{tikz}

\definecolor{lightgrey}{rgb}{0.5,0.5,0.5}
\definecolor{darkblue}{rgb}{0.0,0.0,0.7}
\definecolor{darkred}{rgb}{0.7,0.0,0.0}

\newcommand{\const}{\mathrm{const}}

\newcommand{\range}{\mathrm{range}}

\DeclareMathOperator{\opcurl}{\textnormal{\textbf{curl}}}
\DeclareMathOperator{\opdiv}{\textnormal{div}}
\DeclareMathOperator{\oprot}{\textnormal{curl}}

\newcommand{\complexi}{\textnormal{i}}

\newcommand{\fv}{{\bf f}}

\newcommand{\nv}{{\bf n}}

\newcommand{\tv}{{\bf t}}
\newcommand{\uv}{{\bf u}}
\newcommand{\vv}{{\bf v}}
\newcommand{\wv}{{\bf w}}
\newcommand{\xv}{{\bf x}}
\newcommand{\yv}{{\bf y}}
\newcommand{\zv}{{\bf z}}

\newcommand{\Av}{{\bf A}}
\newcommand{\Bv}{{\bf B}}

\newcommand{\Ev}{{\bf E}}

\newcommand{\Hv}{{\bf H}}

\newcommand{\Kv}{{\bf K}}
\newcommand{\Lv}{{\bf L}}

\newcommand{\Jv}{{\bf J}}

\newcommand{\Xv}{{\bf X}}

\newcommand{\lambdav}{\boldsymbol{\lambda}}

\theoremstyle{plain} 
\newtheorem{lemma}{Lemma}[section]
\newtheorem{theorem}[lemma]{Theorem}

\newtheorem{corollary}[lemma]{Corollary}

\theoremstyle{definition}
\newtheorem{definition}[lemma]{Definition}
\newtheorem{remark}[lemma]{Remark}

\title{A Tearing and Interconnecting Formulation for Magneto-Quasi-Statics}

\author{Clemens Pechstein\thanks{Dassault Syst\`emes Austria GmbH, Promenade 23, 4040 Linz, Austria, \texttt{clemens.pechstein@3ds.com}}}

\date{\today}

\begin{document}

\maketitle

\section{Introduction}

Tearing and interconnecting methods are non-overlapping domain decomposition methods for solving partial differential equations.
The first method of that kind was FETI (finite element tearing and interconnecting), introduced by Farhat and Roux
\cite{FarhatRoux:1991a,FarhatRoux:1994a}.
The original problem is reformulated using functions that are discontinuous across subdomain interfaces where the continuity is reinstalled by Lagrange multipliers. For a finite element setting with two subdomains, the basic formulation in matrix notation reads
\begin{align}
\label{eq:FETISPP}
  \begin{bmatrix} \Kv_1 & 0 & \Bv_1^T \\ 0 & \Kv_2 & \Bv_2 \\ \Bv_1 & \Bv_2 & 0 \end{bmatrix}
	\begin{bmatrix} \uv_1 \\ \uv_2 \\ \lambdav \end{bmatrix}
	\begin{bmatrix} \fv_1 \\ \fv_2 \\ 0 \end{bmatrix},
\end{align}
where $\Kv_k$ is the contribution of the stiffness matrix assembled on the elements of subdomain $\Omega_k$,
$\fv_k$ the contribution to the load vector, and $\Bv_k$ a zero-one matrix such that
$\sum_{k=1}^2 \Bv_k \uv_k = 0$ imposes the continuity constraint on the solution vectors $\uv_k$.
The terms $\Bv_k^T \lambdav$ can be interpreted as forces that ensure equilibrium in each subdomain.
Forming the Schur complement by sparse factorization, the saddle point system~\eqref{eq:FETISPP} can be reduced to a problem in the Lagrange multipliers only,
which is then solved iteratively, typically involving preconditioners.
In case the local matrices $\Kv_k$ are not invertible, one either has to work with their kernel explicitly (as in the original FETI method)
or use a technique called dual-primal FETI (FETI-DP) \cite{FarhatLesoinneLeTallecPiersonRixen:2001a}, see also \cite{ToselliWidlund:Book}.
In the latter approach, for a carefully selected small set of \emph{primal} degrees of freedom (DOFs), no tearing and interconnecting takes place. The local operators then need to be invertible only on the subspace where the primal DOFs vanish and one has to solve in addition a global problem of the size of the primal DOFs.

Whereas FETI and FETI-DP were initially proposed for finite-element discretized problems from structural mechanics and mainly used to speed up and parallelize the solution process,
the basic technology has with the years been carried over to boundary element discretizations \cite{LangerSteinbach:2003a,Of:PhD,Pechstein:FETIBook}
(`BETI')
and isogeometric analysis \cite{KleissPechsteinJuettlerTomar:2012,SchneckenleitnerTakacs:2020} (`IETI').
Also, extensions to problems in acoustics \cite{FarhatMacedoLesoinneRouxMagoulesDeLaBourdonnaie:2000a},
positive definite problems in H(curl) \cite{Toselli:2006,DohrmannWidlund:2016a}
and time-harmonic Maxwell's equations \cite{Vouvakis:PhD,Windisch:PhD,Paraschos:PhD} have been developed,
see also \cite{Pechstein:2023} and references therein.
Closely related methods are balancing Neumann-Neumann \cite{MandelBrezina:1996a}
and balancing domain decomposition by constraints (BDDC) \cite{Dohrmann:2003a,MandelDohrmannTezaur:2005a},
see also \cite{ToselliWidlund:Book,PechsteinDohrmann:2017a} and references therein.

\medskip

The \emph{magneto-quasi-static} or eddy current model is a low-frequent approximation of the Maxwell equations,
neglecting any displacement currents and therefore simplifying an originally hyperbolic problem to a parabolic one \cite{AlonsoValli:Book}.
If all fields are time-invariant,
one is left with the \emph{magneto-static} problem
\begin{align}
  \opcurl\Hv = \Jv_i, \qquad \opdiv(\Bv) = 0, \qquad \Bv = \mu\Hv,
\end{align}
where $\Bv$, $\Hv$ are the magnetic fields, $\Jv_i$ the impressed current density, and $\mu$ the magnetic permeability.
Usually, this problem is reformulated using a magnetic vector potential $\Av$ such that $\Bv = \opcurl\Av$, reading
\begin{align}
\label{eq:MStaticA}
  \opcurl(\mu^{-1} \opcurl\Av) = \Jv_i\,.
\end{align}
Here $\Av$ is only unique up to functions in the kernel of the curl operator, in particular gradient fields.
Classically, to make $\Av$ unique in formulations or numerical computations, one applies a \emph{gauge condition} as a constraint,
e.g.\ the Colomb gauge $\opdiv\Av = 0$.
For finite elements (and also isogeometric analysis, cf.\ \cite{KapidaniMerkelSchoepsVazquez:2022}),
a very efficient and practical technique is the (generalized) \emph{tree-cotree} gauging
\cite{AlbaneseRubinacci:1988,MangesCendes:1995}.
There, the edges of the mesh are split into a spanning tree and the remaining edges, called \emph{cotree edges}.
For a lowest-order N\'ed\'elec discretization,
the curl-curl matrix is invertible on the subspace spanned by the basis functions associated with the cotree edges, see also \cite{RapettiAlonsoLosSantos:2022}.

\medskip

A tearing and interconnecting formulation for magneto-\emph{statics} has been proposed by Mally et al.\
\cite{MallyKapidaniMerkelSchoepsVaquez:2025}.
When applying the tearing step to \eqref{eq:MStaticA}, the local subdomain operators can happen to lose their invertibility property.
This is because the original tree restricted to a subdomain is not necessarily a spanning tree anymore.
However, this problem can be overcome by using small number of primal unknowns in a dual-primal framework,
for details see \cite{MallyKapidaniMerkelSchoepsVaquez:2025}.

\medskip

The magneto-\emph{quasi-static} problem may be formulated as
\begin{align}
\label{eq:MQStaticA}
  \opcurl(\mu^{-1} \opcurl\Av) + \complexi\omega \sigma\Av = \Jv_i\,,
\end{align}
where $\omega$ is the angular frequency in the time-harmonic setting and $\sigma$ the electric conductivity.
The $\Av$-field is non-unique only in the non-conducting region, i.e., the part of the domain where $\sigma$ vanishes.

If $\sigma > 0$ on the \emph{entire} computational domain, say $\Omega$, we do not need any gauge condition,
and a tearing and interconnecting machinery can be applied out of the box.
If, however, the domain $\Omega$ splits into a conducting domain $\Omega_C$ and an insulating (non-conducting) domain $\Omega_I$,
then the following problem arises.
Consider a tree-cotree splitting for the mesh on $\Omega$. We need all \emph{tree} edges associated with $\overline{\Omega_I}$
(particularly on the interface $\Gamma$ between $\Omega_C$ and $\Omega_I$) to be able to represent the full lowest-order N\'ed\'elec space on $\Omega_C$.
However, the basis functions associated with tree edges on the \emph{interface} have support in the non-conducting subdomain as well.
Consequently, if one introduces Lagrange multipliers to couple tree edge DOFs across the interface $\Gamma$,
one ends up with a large kernel of the operator in the insulating domain. Using the idea from \cite{MallyKapidaniMerkelSchoepsVaquez:2025}
in such a context results in a large number of primal DOFs and prevents any iterative method based on that formulation from being efficient.

\medskip

The purpose of the note at hand is to show that, in a certain sence,
the coupling of tree edge DOFs across the interface between conducting and insulating subdomain is not necessary.
Instead, one can work with an $\Av$-field that is only \emph{partially} continuous.
Rather than using a tree-cotree splitting,
the key idea is based on a splitting into gradient fields and a complementary subspace on which the curl-operator is invertible,
such as in \cite{Jochum:PhD}, see also \cite{HiptmairKraemerOstrowski:2008,EllerReitzingerSchoepsZaglmayr:2017}.
In a finite element setting for the lowest-order N\'ed\'elec space, this complementary space can be chosen as the space spanned by the cotree basis functions,
but the following construction works for any complementary space. Also, it is presented for both the continuous as well as the general discrete setting.
With the gradient-based splitting, it is only necessary to impose continuity constraints on traces / DOFs associated with the complementary subspace,
but not with the gradient part.
In the insulating subdomain, no DOFs associated with the gradient part are present.
One can think of the gradient part on the insulating region being simply set to zero, which is why the $\Av$-field solving the formulation is discontinuous.
Although this sounds dangerous at first glance, it turns out that under a mild assumption on the splitting related to the interface,
this nevertheless provides a correct solution to the original equation, with a continuous magnetic field $\Bv = \opcurl\Av$.
In addition, both subdomain operators are invertible.

\medskip

The author is grateful to Sebastian Sch\"ops, Mario Mally, and Rafael V\'asquez for bringing this problem to his attention
and in particular would like to thank Rafael V\'asquez for the fruitful discussion at RICAM Linz in October 2025.

\section{Geometric setting and single-domain formulation}

\subsection{Geometric setting and assumptions}
\label{sect:geoAss}

Let $\Omega \subset \mathbb{R}^3$ be a bounded Lipschitz domain, split into two non-overlapping subdomains $\Omega_C$ (conductor) and $\Omega_I$ (insulator),
and let $\Gamma = \partial\Omega_C \cap \partial\Omega_I$ denote the interface.
We assume that both $\Omega_C$ and $\Omega_I$ are Lipschitz domains and have simple topology,
in particular simply connected with simply connected boundary.

Let $\sigma \in L^\infty(\Omega)$ denote the conductivity coefficient and assume that it is uniformly positive in $\Omega_C$ and zero in $\Omega_I$.
The magnetic permeability $\mu$ is assumed to be $L^\infty(\Omega)$ and uniformly positive in $\Omega$.

For simplicity, we use magnetic boundary conditions on all of $\partial\Omega$,
but the following can be extended to the case of `Dirichlet' (PEC) boundary conditions on $\partial\Omega$ or on a part of $\partial\Omega$.

\subsection{A gradient-explicit formulation of magneto-quasi-statics}

The magneto-quasi-static equations read
\begin{align}
  \left. \begin{array}{rclrcl}
    \opcurl\Hv & = & \Jv_i + \sigma\Ev, \quad &  \opdiv\Bv & = & 0 \\
	  \opcurl\Ev & = & -\complexi\omega \Bv,
	  \qquad & \Bv & = & \mu\Hv
	\end{array} \right\} \qquad & \text{in } \Omega,\\
	\begin{array}{rcl}
	  \Hv \times \nv & = & \Jv_s
	\end{array} \qquad	& \text{on } \partial\Omega,
\end{align}
where $\Jv_i$ is the impressed current density and $\Jv_s$ a surface current, cf.\ \cite{AlonsoValli:Book}.
Note that there is no term of $\varepsilon \Ev$ in the very first equation.
Under the assumption that $\Bv \in \Lv^2(\Omega)$, we can deduce from $\opdiv\Bv = 0$
the existence of a vector potential $\Av \in \Hv(\opcurl,\Omega)$ such that
\begin{align}
\label{eq:BcurlA}
  \Bv = \opcurl\Av,
\end{align}
see e.g., \cite[Thm.~3.38]{Monk:Book}.
Assuming that $\Ev \in \Lv^2(\Omega)$,
we find from $\opcurl(\Ev+\complexi\omega\Av) = 0$ that there exists a scalar potential $\varphi \in H^1(\Omega)$ such that
\begin{align}
\label{eq:EphiA}
  \Ev = -\nabla\varphi - \complexi\omega\Av \qquad \text{in } \Omega,
\end{align}
see e.g., \cite[Thm.~3.37]{Monk:Book}.
After inserting the constitutive law $\Hv = \mu^{-1} \Bv$ and the two relations above, the equation $\opcurl\Hv = \Jv_i + \sigma\Ev$ changes to
\begin{align}
  \opcurl(\mu^{-1} \opcurl\Av) = \Jv_i + \sigma(-\nabla\varphi - \complexi\omega\Av) \qquad \text{in } \Omega,
\end{align}
and the boundary condition to
\begin{align}
  \mu^{-1} \opcurl\Av \times \nv = \Jv_s \qquad \text{on } \partial\Omega.
\end{align}
The variational formulation reads as follows.
Find $\Av \in \Hv(\opcurl, \Omega)$ and $\varphi \in H^1(\Omega)$ such that
\begin{align}
\label{eq:MQSunconstrained}
  \int_\Omega \mu^{-1} \opcurl\Av \cdot \opcurl \vv \, dx + \int_{\Omega_C} \sigma (\nabla\varphi + \complexi\omega \Av) \cdot \vv \, dx
	= \langle \Jv, \vv \rangle_\Omega
\end{align}
for all $\vv \in \Hv(\opcurl,\Omega)$.
Above, we have combined $\Jv_i$ and $\Jv_s$ into a distributional current $\Jv \in \Hv(\opcurl,\Omega)^*$
and $\langle \cdot, \cdot \rangle_\Omega$ denotes the dual pairing on $\Hv(\opcurl,\Omega)^* \times \Hv(\opcurl,\Omega)$,

Equation~\eqref{eq:MQSunconstrained} is only solvable under the compatibility condition that $\Jv$ is solenoidal outside of the conducting domain, i.e.,
\begin{align}
\label{eq:JsoleniodalI}
  \langle \Jv, \nabla q_I \rangle_\Omega = 0 \qquad \forall q_I \in H^1_\Gamma(\Omega_I),
\end{align}
where $H^1_\Gamma(\Omega_I)$ is the subspace of $H^1(\Omega_I)$ functions vanishing on $\Gamma$
(and whose extension by zero is therefore in $H^1(\Omega)$).

The electric field is defined via \eqref{eq:EphiA}. However, this field is only well-defined in the conducting domain $\Omega_C$.
Outside, $\varphi$ can be chosen quite arbitrarily without altering \eqref{eq:MQSunconstrained}.
In order to make $\varphi$ unique outside the conducting domain, we can, e.g., use
\begin{align}
  \int_{\Omega_I} \nabla \varphi \cdot \nabla q_I \, dx = 0 \qquad \forall q_I \in H^1_\Gamma(\Omega_I)
\end{align}
or any other appropriate condition.
But no matter how this is done, the values of $\varphi$ outside of $\Omega_C$ do not play any role in \eqref{eq:MQSunconstrained},
which is why we can restrict $\varphi$ to $H^1(\Omega_C)$ from the start.
To obtain an (arbitrary) $\Ev$-field in the whole domain, we can then take this $\varphi$ and extend it to a function in $H^1(\Omega)$. 

The splitting \eqref{eq:EphiA} in $\overline{\Omega_C}$ is not unique either.
And even if this splitting is made unique, looking at \eqref{eq:MQSunconstrained},
still the $\Av$-field outside of $\Omega_C$ is only unique up to $\nabla H^1_\Gamma(\Omega_I)$.
In order to fix $\Av$, we can, e.g., use the Coulomb gauge
\begin{align}
  \int_\Omega \Av \cdot \nabla q \, dx = 0 \qquad \forall q \in H^1(\Omega),
\end{align}
or we could also use two separate conditions,
\begin{alignat}{2}
  \int_{\Omega_C} \Av \cdot \nabla q_C \, dx & = 0 \qquad && \forall q_C \in H^1(\Omega_C),\\
	\int_{\Omega_I} \Av \cdot \nabla q_I \, dx & = 0 \qquad && \forall q_I \in H^1_\Gamma(\Omega_I),
\end{alignat}
or we can say that $\Av$ is from any complementary space $\mathcal{V}$ such that
\begin{align}
  \mathcal{H} := \Hv(\opcurl,\Omega) = \nabla \underbrace{ H^1(\Omega)_{/\const} }_{=: \mathcal{U}} \oplus \mathcal{V},
\end{align}
where this splitting is \emph{direct} but not necessarily orthogonal.
The key properties of the subspaces $\mathcal{U}$ and $\mathcal{V}$ are that
\begin{enumerate}
\item[(i)] $\opcurl \uv = 0$ for all $\uv \in \nabla \mathcal{U}$,
\item[(ii)] the bilinear form $\int_{\Omega} \mu^{-1} \opcurl \vv \cdot \opcurl \wv \, dx$ is coercive on $\mathcal{V}$
  (positively bounded from below) and therefore associated with an invertible operator.
\end{enumerate}
In \eqref{eq:MQSunconstrained}, the test function $\vv$ is still from the entire space $\Hv(\opcurl,\Omega)$.
But we may of course test with $\vv = \wv \in \mathcal{V}$ and $\vv = \nabla q$ with $q \in \mathcal{U}$ separately:
Find $\Av \in \mathcal{V}$ and $\varphi \in H^1(\Omega_C)_{/\const}$ such that
\begin{align}
  \int_\Omega \mu^{-1} \opcurl\Av \cdot \opcurl \wv \, dx + \int_{\Omega_C} \sigma (\nabla\varphi + \complexi\omega \Av) \cdot \wv \, dx
	& = \langle \Jv, \wv \rangle_\Omega \\
\label{eq:preMQS:2}
	\int_{\Omega_C} \sigma (\nabla\varphi + \complexi\omega \Av) \cdot \nabla q \, dx
	& = \langle \Jv, \nabla q \rangle_\Omega
\end{align}
for all $\wv \in \mathcal{V}$ and $q \in H^1(\Omega)_{/\const}$.
Apparently, the last equation is not altered if we test with $q \in H^1(\Omega)$, including globally constant functions.
Since both left- and right-hand side of \eqref{eq:preMQS:2} vanish for $q \in H^1_\Gamma(\Omega)$
(see \eqref{eq:JsoleniodalI}), we can restrict the test functions $q$
to any complementary space $\mathcal{Q}$ such that $H^1(\Omega) = \mathcal{Q} \oplus H^1_\Gamma(\Omega_I)$.
All such spaces $\mathcal{Q}$ are isomorphic to $H^1(\Omega_C)$ but one needs to fix a particular extension from $\Omega_C$ to $\Omega$.
Let $E$ denote a bounded linear extension operator, then we can simply set $\mathcal{Q} = E(H^1(\Omega_C))$.
Finally, we can factor out again the global constants  in $\varphi$ on $\overline{\Omega_C}$.

To summarize: find $\Av \in \mathcal{V}$ and $\varphi \in \mathcal{U}_C := H^1(\Omega_C)_{/\const}$ such that
\begin{align}
\label{eq:MQS:1}
  \int_\Omega \mu^{-1} \opcurl\Av \cdot \opcurl \wv \, dx + \int_{\Omega_C} \sigma (\nabla\varphi + \complexi\omega \Av) \cdot \wv \, dx
	& = \langle \Jv, \wv \rangle_\Omega \\
\label{eq:MQS:2}
	\int_{\Omega_C} \sigma (\nabla\varphi + \complexi\omega \Av) \cdot \nabla q \, dx
	& = \langle \Jv, \nabla(E q) \rangle_\Omega
\end{align}
for all $\wv \in \mathcal{V}$ and $q \in \mathcal{U}_C$.
A similar formulation can be found in \cite[Sect.~6.1]{AlonsoValli:Book}.
Because of \eqref{eq:JsoleniodalI} the right-hand side is independent of the particular extension operator $E$.
Using an operator notation, the equations~\eqref{eq:MQS:1}--\eqref{eq:MQS:2} can be rewritten as follows (with a more explicit structure):
find $(\Av, \varphi) \in \mathcal{V} \times \mathcal{U}_C$ such that
\begin{align}
\label{eq:MQS:operator}
  \begin{bmatrix} K + \complexi\omega M & S^T \\ \complexi\omega S & C \end{bmatrix}
	\begin{bmatrix} \Av \\ \varphi \end{bmatrix}
	= \begin{bmatrix} J \\ j \end{bmatrix}
\end{align}
with
\begin{align}
\begin{alignedat}{2}
  \langle K \Av, \wv \rangle & := \int_\Omega \mu^{-1} \opcurl\Av \cdot \opcurl \wv \, dx \qquad && \forall \Av, \wv \in \mathcal{V},\\
	\langle M \Av, \wv \rangle & := \int_{\Omega_C} \sigma \Av \cdot \wv \, dx              \qquad && \forall \Av, \wv \in \mathcal{V},\\
	\langle S \Av, q \rangle & := \int_{\Omega_C} \sigma \Av \cdot \nabla q \, dx           \qquad && \forall \Av \in \mathcal{V}, q \in \mathcal{U}_C,\\
	\langle C \varphi, q \rangle & := \int_{\Omega_C} \sigma \nabla\varphi \cdot \nabla q \, dx \qquad && \forall \varphi, q \in \mathcal{U}_C,\\
	\langle J, \wv \rangle & := \langle \Jv, \wv \rangle_\Omega                                 \qquad && \forall \wv \in \mathcal{V},\\
\label{eq:jDef}
	\langle j, q \rangle & := \langle \Jv, \nabla(E q) \rangle_\Omega                           \qquad && \forall q \in \mathcal{U}_C.
\end{alignedat}
\end{align}
The advantage of this formulation (for our purposes, compared to other magneto-quasistatic formulations)
is that the operator $K$ is invertible and that the gradient fields $\nabla \varphi$,
spanning the kernel of the curl operator, are separated.
Note that by the substitution $\varphi = \complexi\omega \widetilde\varphi$ (assuming $\omega > 0$),
one can make \eqref{eq:MQS:operator} symmetric.

\subsection{Relation to a more conventional formulation}

Given that $\omega > 0$, we may start again with \eqref{eq:MQSunconstrained} and set $\varphi = 0$ (like a gauge condition), such that
\begin{align}
  \Ev = - \complexi\omega\Av,
\end{align}
compared to \eqref{eq:EphiA}.
The variational formulation then reads: find $\Av \in \Hv(\opcurl,\Omega)$ such that
\begin{align}
  \int_\Omega \mu^{-1} \opcurl\Av \cdot \opcurl\vv \, dx + \complexi\omega \int_{\Omega_C} \sigma \Av \cdot \vv \, dx = \langle \Jv, \vv \rangle_\Omega
\end{align}
for all $\vv \in \Hv(\opcurl,\Omega)$.
Here, again $\Jv \in \Hv(\opcurl,\Omega)^*$ with \eqref{eq:JsoleniodalI}.
The solution $\Av$, however, is not unique; one can add any function from $\nabla H^1_\Gamma(\Omega_I)$
(implicitly extended by zero from $\Omega_I$ to $\Omega$).
To make $\Av$ unique, we use some complementary space $\mathcal{W}$ such that
\begin{align}
  \Hv(\opcurl,\Omega) = \nabla H^1_\Gamma(\Omega_I) \oplus \mathcal{W}.
\end{align}
If we split $\mathcal{W} = \mathcal{Q} \oplus \mathcal{V}$ with $\mathcal{Q} = \nabla E(H^1(\Omega_C)_{/\const})$
and with a suitable subspace $\mathcal{V}$,
then the formulation rewrites: find $\Av_\mathcal{Q} \in \mathcal{Q}$ and $\Av_\mathcal{V} \in \mathcal{V}$ such that
\begin{align}
  \int_\Omega \mu^{-1} \opcurl\Av_\mathcal{V} \cdot \opcurl\wv \, dx + \complexi\omega \int_{\Omega_C} \sigma (\Av_\mathcal{Q} + \Av_\mathcal{V}) \cdot \wv \, dx
	& = \langle \Jv, \wv \rangle_\Omega \\
	\complexi\omega \int_{\Omega_C} \sigma (\Av_\mathcal{Q} + \Av_\mathcal{V}) \cdot \yv \, dx & = \langle \Jv, \yv \rangle_\Omega
\end{align}
for all $\yv \in \mathcal{Q}$.
Setting $\Av_\mathcal{Q} = \frac{1}{\complexi\omega} \nabla(E \varphi)$ and $\yv = \nabla(E q)$ with $\varphi$, $q \in H^1(\Omega_C)_{/\const}$
yields again \eqref{eq:MQS:1}--\eqref{eq:MQS:2}.

\subsection{Discrete setting}

In order to discretize formulation~\eqref{eq:MQS:operator}, we simply use a subspace $\mathcal{H}_h \subset \mathcal{H} = \Hv(\opcurl,\Omega)$
and subspaces $\mathcal{U}_h \subset \mathcal{U}$ and $\mathcal{V}_h \subset \mathcal{H}_h$,
and set $\mathcal{U}_{C,h} = \mathcal{U}_{h|\Omega_C}$, such that
\begin{enumerate}
\item[(i)] $\mathcal{H}_h = \nabla \mathcal{U}_h \oplus \mathcal{V}_h$,
\item[(ii)] $\opcurl \uv_h = 0$ for all $\uv \in \nabla \mathcal{U}_h$,
\item[(iii)] the bilinear form $\int_{\Omega} \mu^{-1} \opcurl \vv_h \cdot \opcurl \wv_h \, dx$ is coercive
  (positively bounded from below) on $\mathcal{V}_h$.
\end{enumerate}
Then the Galerkin projection of~\eqref{eq:MQS:operator} has exactly the same form, just that $\mathcal{U}_C$ and $\mathcal{V}$
are replaced by $\mathcal{U}_{C,h}$, $\mathcal{V}_h$.

As an example, if $\mathcal{H}_h$ is the lowest-order space N\'ed\'elec space and $\mathcal{U}_h$ the space of piecewise linear functions
(with the global constant factored out),
then we can choose $\mathcal{V}_h$ as the space spanned by the basis functions associated with cotree edges, cf.\ \cite{Jochum:PhD}.

Note that the original condition \eqref{eq:JsoleniodalI} has to be replaced by
\begin{align}
\label{eq:JsoleniodalII}
  \langle \Jv, \nabla q \rangle_{\Omega} = 0 \qquad \forall q \in \mathcal{U}_h \text{ with } q_{|\overline{\Omega_C}} = 0,
\end{align}
i.e., the given current must be discrete divergence-free outside the conducting domain.
In the sequel, the subscript $h$ is omitted and the continuous and discrete case are treated uniformly.

\section{Auxiliary results}

In the following, for any vector field $\vv$ defined on $\Omega_C$, $\Omega_I$, or $\Gamma$,
\[
  \vv_\tau := \nv \times \vv \times \nv, \qquad \vv_n := \vv \cdot \nv
\]
denote the tangential and normal component on $\Gamma$, while $\nv$ is a unit normal vector on $\Gamma$ with fixed orientation.

\begin{lemma}
  Let $u \in L^2(\Omega)$ with $u_C := u_{|\Omega_C} \in H^1(\Omega_C)$ and $u_I := u_{|\Omega_I} \in H^1(\Omega_I)$
	and define
	\[
	  (\widetilde{\nabla} u)(\xv) := \begin{cases}
		  (\nabla u_C)(\xv) & \xv \in \Omega_C\,,\\
			(\nabla u_I)(\xv) & \xv \in \Omega_I\,.
		\end{cases}
	\]
	Then $\widetilde{\nabla} u \in \nabla H^1(\Omega) \subset \Hv(\opcurl,\Omega)$ if any only if
	\[
	  \nabla_\Gamma u_C - \nabla_\Gamma u_I = 0 \quad \text{on } \Gamma,
	\]
	where $\nabla_\Gamma$ is the surface gradient.
	If $\Gamma$ is connected, the last condition is equivalent to
	\[
	  u_C - u_I = \const \quad \text{on } \Gamma.
	\]
\end{lemma}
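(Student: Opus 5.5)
The plan is to characterize when the piecewise gradient $\widetilde\nabla u$ lies in $\Hv(\opcurl,\Omega)$ and then identify it as a global gradient. We use the standard transmission characterization: a piecewise $\Hv(\opcurl)$ field $\wv$ with $\wv_C\in\Hv(\opcurl,\Omega_C)$ and $\wv_I\in\Hv(\opcurl,\Omega_I)$ belongs to $\Hv(\opcurl,\Omega)$ if and only if the tangential traces agree on $\Gamma$, i.e.\ $(\wv_C)_\tau=(\wv_I)_\tau$ (equivalently $\wv_C\times\nv=\wv_I\times\nv$) in the appropriate trace space $\Hv^{-1/2}$. We apply this to $\wv_C=\nabla u_C$ and $\wv_I=\nabla u_I$, which have zero curl in each subdomain. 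The tangential trace of a gradient is the surface gradient of the scalar trace, $(\nabla u_C)_\tau=\nabla_\Gamma(u_C|_\Gamma)$, first for smooth $u$ and then by density and continuity of the trace maps $H^1\to H^{1/2}(\Gamma)$ and $\Hv(\opcurl)\to \Hv^{-1/2}$. This gives $\widetilde\nabla u\in\Hv(\opcurl,\Omega)$ if and only if $\nabla_\Gamma u_C=\nabla_\Gamma u_I$ on $\Gamma$.

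It remains to upgrade ``$\widetilde\nabla u\in\Hv(\opcurl,\Omega)$'' to ``$\widetilde\nabla u\in\nabla H^1(\Omega)$''. One direction is trivial: if $\widetilde\nabla u=\nabla p$ with $p\in H^1(\Omega)$, then $\widetilde\nabla u\in\Hv(\opcurl,\Omega)$ and the trace condition holds. For the converse, if the trace condition holds, then $\widetilde\nabla u\in\Hv(\opcurl,\Omega)$. Its curl vanishes in each subdomain, and there is no interface contribution: testing with $\boldsymbol\phi\in C_0^\infty(\Omega)^3$ and integrating by parts on each subdomain leaves only $\int_\Gamma (\nabla_\Gamma u_C-\nabla_\Gamma u_I)\cdot(\boldsymbol\phi\times\nv)$, which is zero. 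Since $\Omega$ is Lipschitz and simply connected (it is a union of two simply connected pieces glued along a connected interface; alternatively, one can argue directly as below), \cite[Thm.~3.37]{Monk:Book} gives $p\in H^1(\Omega)$ with $\nabla p=\widetilde\nabla u$.

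To avoid topological subtleties, I would instead construct $p$ directly when $\Gamma$ is connected. The key step is that $\nabla_\Gamma g=0$ for $g:=u_C|_\Gamma-u_I|_\Gamma\in H^{1/2}(\Gamma)$ implies $g=c$ constant on a connected Lipschitz surface piece. This follows from local charts, in which $\nabla_\Gamma g=0$ means the distributional gradient of the pulled-back function vanishes, so it is locally constant, and connectedness then gives a global constant. Then set $p=u_C$ in $\Omega_C$ and $p=u_I+c$ in $\Omega_I$. The traces match, so $p\in H^1(\Omega)$ with $\nabla p=\widetilde\nabla u$. Conversely, $u_C-u_I=\const$ trivially gives $\nabla_\Gamma(u_C-u_I)=0$, which establishes the final equivalence. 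If $\Gamma$ is not connected, we have a constant on each component. In that case I would rely on the curl-free argument with Monk's theorem, which needs $\Omega$ itself to be simply connected.

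I expect the main obstacle to be this last point: making rigorous the definition of $\nabla_\Gamma$ on $H^{1/2}(\Gamma)$ traces when $\Gamma$ is only a part of the boundaries, which may have a relative boundary in $\partial\Omega$. The natural tool is to interpret the condition via tangential traces in the dual sense, i.e.\ $\int_\Gamma (u_C-u_I)\,\opdiv_\Gamma\boldsymbol\psi=0$ for smooth tangential $\boldsymbol\psi$ compactly supported in $\Gamma$, and then apply the local-chart argument on the interior of $\Gamma$.
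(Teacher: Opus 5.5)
Your proposal is correct and follows the paper's route: continuity of tangential traces across $\Gamma$ for piecewise $\Hv(\opcurl)$ fields, the identity $(\nabla v)_\tau=\nabla_\Gamma v$, and constancy of $u_C-u_I$ on a connected $\Gamma$. You go beyond the paper's proof, which only establishes $\widetilde\nabla u\in\Hv(\opcurl,\Omega)$, by gluing $p=u_C$ on $\Omega_C$ and $p=u_I+c$ on $\Omega_I$ to get $\widetilde\nabla u\in\nabla H^1(\Omega)$; this gluing is in fact the whole story, because for connected $\Omega_C,\Omega_I$ a disconnected $\Gamma$ already forces $\Omega$ to be non-simply-connected (take $u_C=0$ and $u_I$ equal to different constants near different components of $\Gamma$), so your Monk-based fallback never applies and the first equivalence genuinely needs $\Gamma$ connected.
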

\begin{proof}
  Apparently, the vector field $\widetilde{\nabla} u$ is piecewise in $\Hv(\opcurl)$.
	The same vector field is globally in $\Hv(\opcurl,\Omega)$ if and only if the tangential trace on the interface has to be continuous, i.e.,
	\[
	  (\nabla u_C)_\tau = (\nabla u_I)_\tau \qquad \text{on } \Gamma.
	\]
	Since $(\nabla v)_\tau = \nabla_\Gamma v$ this is equivalent to
	\[
	  \nabla_\Gamma (u_C - u_I) = 0 \quad \text{on } \Gamma.
	\]
	If $\Gamma$ is connected, this means that $u_C - u_I$ is constant.
\end{proof}

\begin{lemma}
\label{lem:pwHCurl}
  Let $\vv \in \Lv^2(\Omega)$ with $\vv_C := \vv_{|\Omega_C} \in \Hv(\opcurl,\Omega_C)$ and $\vv_I := \vv_{|\Omega_I} \in \Hv(\opcurl,\Omega_I)$
	and define
	\[
	  (\widetilde{\opcurl}\, \vv)(\xv) := \begin{cases}
		  (\opcurl \vv_C)(\xv) & \xv \in \Omega_C\,,\\
			(\opcurl \vv_I)(\xv) & \xv \in \Omega_I\,.
		\end{cases}
	\]
	Then $\widetilde{\opcurl}\, \vv \in \opcurl \Hv(\opcurl,\Omega) \subset \Hv(\opdiv, \Omega)$ if and only if
	\[
	  \oprot_\Gamma \vv_{C,\tau} - \oprot_\Gamma \vv_{I,\tau} = 0 \quad \text{on } \Gamma,
	\]
	where $\oprot_\Gamma$ is the scalar surface curl of a tangential vector field.
	If $\Gamma$ is simply connected, the last condition is equivalent to
	\[
	  \vv_{C,\tau} - \vv_{I,\tau} = \nabla_\Gamma \psi
		\quad \text{for some } \psi \in H^{1/2}(\Gamma).
	\]
\end{lemma}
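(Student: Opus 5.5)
Since $\Omega_C$ and $\Omega_I$ have simple topology, I would characterize $\opcurl\Hv(\opcurl,\Omega)$ as the divergence-free fields in $\Hv(\opdiv,\Omega)$, i.e.\ $\opcurl\Hv(\opcurl,\Omega)=\{\bv\in\Hv(\opdiv,\Omega):\opdiv\bv=0\}$, possibly with a normal-flux condition on $\partial\Omega$. This is \cite[Thm.~3.38]{Monk:Book}, already used for \eqref{eq:BcurlA}. Since $\partial\Omega$ may carry no boundary condition in our setting, I must check whether the boundary flux of $\widetilde{\opcurl}\,\vv$ over each component of $\partial\Omega$ vanishes. It does: on each subdomain, $\int_{\partial\Omega_k}(\opcurl\vv_k)\cdot\nv=0$. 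With simply connected boundaries and the resulting connectedness, I expect no extra cohomological obstruction. Thus the statement reduces to: $\widetilde{\opcurl}\,\vv\in\Hv(\opdiv,\Omega)$ with zero divergence.

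The first step is that $\opdiv\,\opcurl\vv_k=0$ on each $\Omega_k$, so the piecewise field is piecewise in $\Hv(\opdiv)$ with zero divergence. The standard gluing lemma then says it lies in $\Hv(\opdiv,\Omega)$, and is automatically divergence-free there, if and only if the normal traces agree on $\Gamma$: $(\opcurl\vv_C)_n=(\opcurl\vv_I)_n$ in $H^{-1/2}(\Gamma)$. The second step is the trace identity $(\opcurl\wv)\cdot\nv=\oprot_\Gamma(\wv_\tau)$, valid for $\wv\in\Hv(\opcurl)$ in the distributional sense on $\Gamma$. I would verify it by density of smooth fields and integration by parts: for $\phi$ smooth on $\partial\Omega_k$, Stokes' formula gives $\int_{\partial\Omega_k}\opcurl\wv\cdot\nv\,\phi=\int_{\partial\Omega_k}\wv_\tau\cdot\overrightarrow{\oprot}_\Gamma\phi$. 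Restricting to test functions supported in $\Gamma$, with attention to the sign convention from the fixed orientation of $\nv$, gives the claimed equivalence of the first condition.

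For the second part, the condition says $\zv:=\vv_{C,\tau}-\vv_{I,\tau}$ is a tangential field on $\Gamma$ with $\oprot_\Gamma\zv=0$. On a simply connected surface, the surface de Rham / Hodge decomposition of tangential fields gives $\zv=\nabla_\Gamma\psi$. Regularity follows because $\zv$ lies in the tangential trace space $\Hv^{-1/2}_\parallel(\oprot_\Gamma,\Gamma)$, whose curl-free part is $\nabla_\Gamma H^{1/2}(\Gamma)$ \cite{Monk:Book}. The converse direction is trivial, since $\oprot_\Gamma\nabla_\Gamma=0$.

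I expect the main obstacle to be the trace spaces when $\Gamma$ is only a part of $\partial\Omega_C$ and has a boundary. Then $\oprot_\Gamma$ and the $H^{1/2}(\Gamma)$ potential must be interpreted correctly, with traces restricted to $\Gamma$ and tested against functions in $\widetilde H^{1/2}(\Gamma)$, and the Hodge decomposition on an open simply connected Lipschitz surface must be invoked with care. My first attempt would be to work on the closed surfaces $\partial\Omega_C$ and $\partial\Omega_I$ and localize.
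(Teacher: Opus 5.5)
Your main line matches the paper's proof: piecewise $\Hv(\opdiv)$ fields, gluing through continuity of normal traces on $\Gamma$, the identity $(\opcurl\wv)\cdot\nv=\pm\oprot_\Gamma\wv_\tau$, and curl-free tangential fields being surface gradients on a simply connected $\Gamma$. The paper, however, only establishes membership in $\Hv(\opdiv,\Omega)$ and tacitly identifies this with membership in $\opcurl\Hv(\opcurl,\Omega)$. You rightly notice that this identification needs zero flux through every component of $\partial\Omega$, but your justification of that step fails. The identity $\int_{\partial\Omega_k}(\opcurl\vv_k)\cdot\nv=0$ concerns the total flux over $\partial\Omega_k$, which includes $\Gamma$, and it does not split over the components of $\partial\Omega$. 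Nor do connected boundaries of $\Omega_C$ and $\Omega_I$ make $\partial\Omega$ connected.

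Take the shell $\Omega=\{1<|\xv|<2\}$ with $\Omega_C=\Omega\cap\{z>0\}$ and $\Omega_I=\Omega\cap\{z<0\}$. Both pieces are simply connected Lipschitz domains with simply connected boundary, but $\Gamma$ is an annulus and $\partial\Omega$ consists of two spheres. Let $\theta$ be the azimuthal angle and $\chi(z)$ smooth with $\chi=1$ near $0$ and $\chi=0$ for $z\ge 1/2$. Set $\vv_C=\chi(z)\nabla\theta$, which is smooth on $\overline{\Omega_C}$ because the distance to the $z$-axis is at least $\sqrt3/2$ where $\chi\neq0$, and set $\vv_I=0$. Then $\widetilde{\opcurl}\,\vv$ equals $\nabla\chi\times\nabla\theta$ in $\Omega_C$ and $0$ in $\Omega_I$, so it vanishes near $\Gamma$. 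Hence the surface-curl condition holds and $\widetilde{\opcurl}\,\vv$ is a divergence-free element of $\Hv(\opdiv,\Omega)$. But by Stokes' theorem on the upper unit hemisphere, its flux through $\{|\xv|=1\}$ equals the circulation of $\nabla\theta$ around the unit equator, i.e.\ $\pm2\pi$. In contrast, $\opcurl\wv$ has zero flux through each component of $\partial\Omega$ for every $\wv\in\Hv(\opcurl,\Omega)$: test with a smooth function equal to $1$ near one sphere and $0$ near the other, whose gradient is compactly supported in $\Omega$. So the first equivalence, read literally, is false under the standing assumptions, and your step ``It does'' cannot be repaired. The surface-curl condition characterizes only divergence-free membership in $\Hv(\opdiv,\Omega)$.

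What is true is a domain-level characterization, and it also avoids the open-surface Hodge decomposition you flag as your main obstacle. Namely, $\widetilde{\opcurl}\,\vv\in\opcurl\Hv(\opcurl,\Omega)$ if and only if $\vv_{C,\tau}-\vv_{I,\tau}=\nabla_\Gamma\psi$ for some $\psi\in H^{1/2}(\Gamma)$, regardless of the topology of $\Gamma$.
\begin{itemize}
\item Sufficiency: if the jump is $\nabla_\Gamma\psi$, lift $\psi$ to $\Psi\in H^1(\Omega_I)$. Then $(\vv_C,\vv_I+\nabla\Psi)$ lies in $\Hv(\opcurl,\Omega)$ and has curl $\widetilde{\opcurl}\,\vv$.
\item Necessity: if $\widetilde{\opcurl}\,\vv=\opcurl\wv$ with $\wv\in\Hv(\opcurl,\Omega)$, simple connectedness of $\Omega_C$ and $\Omega_I$ gives $\vv_k-\wv_{|\Omega_k}=\nabla p_k$ with $p_k\in H^1(\Omega_k)$. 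Hence the jump is $\nabla_\Gamma(p_C-p_I)$ with $(p_C-p_I)_{|\Gamma}\in H^{1/2}(\Gamma)$.
\end{itemize}
If $\Gamma$ is simply connected, then $\partial\Omega$ is connected, being two discs glued along $\partial\Gamma$. The divergence theorem then makes the single flux condition automatic, and \cite[Thm.~3.38]{Monk:Book} turns the surface-curl condition into $\widetilde{\opcurl}\,\vv=\opcurl\wv$. The necessity argument above then yields $\psi$ with the right regularity, without any surface Hodge theory. This proves the second part of the lemma rigorously. It also shows that the first part holds as soon as $\partial\Omega$ is connected, independently of the subdomain topology you invoked.
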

\begin{proof}
  Apparently, the vector field $\widetilde{\opcurl} \vv$ is piecewise in $\Hv(\opdiv)$.
	The same vector field is globally in $\Hv(\opdiv,\Omega)$ if and only if the normal trace on the interface has to be continuous, i.e.,
	\[
	  (\opcurl \vv_C)_n = (\opcurl \vv_I)_n = 0 \qquad \text{on } \Gamma
	\]
	in the sense of $H^{1/2}(\Gamma)^*$.
	Since $(\opcurl \vv)_n = \pm \oprot_\Gamma \vv_\tau$ this is equivalent to
	\[
	  \oprot_\Gamma (\vv_{C,\tau} - \vv_{I,\tau}) = 0 \quad \text{on } \Gamma.
	\]
	If $\Gamma$ is simply connected, this means that $\vv_{C,\tau} - \vv_{I,\tau}$ is a surface gradient field.
\end{proof}

In the following, let $\Hv(\opcurl_\Gamma, \Gamma) := \{ \vv_{\tau|\Gamma} \colon \vv \in \Hv(\opcurl,\Omega) \}$
be the tangential trace space of $\Hv(\opcurl, \Omega)$
such that the trace operator $\Hv(\opcurl, \Omega) \ni \vv \mapsto \vv_\tau \in \Hv(\opcurl_\Gamma, \Gamma)$
is surjective and has a continuous right-inverse (see \cite{BuffaCostabelSheen:2002}).
Note that here $\Omega$ can be replaced by $\Omega_C$ or $\Omega_I$, resulting in the same trace space.

\begin{definition}
\label{def:compatible}
  Let $\mathcal{H}$ either be $\Hv(\opcurl, \Omega)$ or a discrete subspace and let
	$\mathcal{H} = \nabla\mathcal{U} \oplus \mathcal{V}$ be a direct space splitting.
	We call this splitting \emph{compatible} with the geometric decomposition of $\Omega$ into $(\Omega_C, \Omega_I, \Gamma)$
	if the following conditions hold.
	\begin{enumerate}
	\item[(i)]
		Let $\mathcal{H}_\Gamma := \{ \wv_{\tau|\Gamma} \colon \wv \in \mathcal{H} \}$,
		$\mathcal{U}_\Gamma := \{ u_{|\Gamma} \colon u \in \mathcal{U} \}$, and
		$\mathcal{V}_\Gamma := \{ \vv_{\tau|\Gamma} \colon \vv \in \mathcal{V} \}$ denote the trace spaces of $\mathcal{H}$, $\mathcal{U}$, $\mathcal{V}$
		respectively,
		then
		\[
		  \mathcal{H}_\Gamma = \nabla \mathcal{U}_\Gamma \oplus \mathcal{V}_\Gamma\,.
		\]
	\item[(ii)]
	  Let $\mathcal{H}_C := \{ \wv_{|\Omega_C} \colon \wv \in \mathcal{H} \}$,
		$\mathcal{U}_C := \{ u_{|\Omega_C} \colon u \in \mathcal{U} \}$, and
		$\mathcal{V}_C := \{ \vv_{|\Omega_C} \colon \vv \in \mathcal{V} \}$
		denote the restrictions of $\mathcal{H}$, $\mathcal{U}$, $\mathcal{V}$
		to $\Omega_C$, then
	  \[
		  \mathcal{H}_C = \nabla \mathcal{U}_C \oplus \mathcal{V}_C\,.
		\]
	\item[(iii)]
	  Let $\mathcal{H}_I$, $\mathcal{U}_I$, and $\mathcal{V}_I$ denote the restrictions of $\mathcal{H}$, $\mathcal{U}$, $\mathcal{V}$
		to $\Omega_I$, respectively, then
	  \[
		  \mathcal{H}_I = \nabla \mathcal{U}_I \oplus \mathcal{V}_I\,.
		\]
	\end{enumerate}
\end{definition}

\begin{remark}
  In the case where $\mathcal{H} = \Hv(\opcurl,\Omega)$, the space $\mathcal{U}_\Gamma$ is $H^{1/2}(\Gamma)_{/\const}$.
	If $\mathcal{H}$ is the lowest-order N\'ed\'elec space,
	we can choose $\mathcal{U}_\Gamma$ as the piecewise linear subspace of $H^{1/2}(\Gamma)_{/\const}$
	and use a tree-cotree splitting on $\Gamma$ that is continued to a tree-cotree splitting on $\Omega_C$ and $\Omega_I$, separately.
\end{remark}

\begin{lemma}
\label{lem:compatSplit}
  Let $\Omega_C$, $\Omega_I$, and $\Gamma$ meet the assumptions stated in Sect.~\ref{sect:geoAss}.
  Let $\Hv(\opcurl, \Omega) \supseteq \mathcal{H} = \nabla \mathcal{U} \oplus \mathcal{V}$
	be a direct splitting, compatible in the sense of Definition~\ref{def:compatible}
	and let $\vv \in \Lv^2(\Omega)$ be given by $\vv_{|\Omega_C} = \vv_C \in \mathcal{V}_C$ and $\vv_{|\Omega_I} = \vv_I \in \mathcal{V}_I$.
	Then the following three conditions are equivalent:
	\begin{enumerate}
	\item[(i)]
	  $\widetilde{\opcurl}\, \vv \in \opcurl \Hv(\opcurl,\Omega) \subset \Hv(\opdiv, \Omega)$,
	\item[(ii)]
	  $\vv_{C,\tau} - \vv_{I,\tau} = 0$ on $\Gamma$,
	\item[(iii)]
	  $\vv \in \mathcal{V}$.
	\end{enumerate}
\end{lemma}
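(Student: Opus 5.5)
We prove the cycle (iii) $\Rightarrow$ (i) $\Rightarrow$ (ii) $\Rightarrow$ (iii), using Lemma~\ref{lem:pwHCurl} for the link between (i) and tangential traces, and the compatibility conditions of Definition~\ref{def:compatible} for the link between traces and membership in $\mathcal{V}$.

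\emph{(iii) $\Rightarrow$ (i).} If $\vv \in \mathcal{V} \subset \Hv(\opcurl,\Omega)$, then $\widetilde{\opcurl}\,\vv = \opcurl\vv$ as distributions on $\Omega$, so (i) is immediate.

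\emph{(i) $\Rightarrow$ (ii).} By the second part of Lemma~\ref{lem:pwHCurl}, which applies because $\Gamma$ is simply connected by the assumptions of Sect.~\ref{sect:geoAss}, (i) gives
\[
\vv_{C,\tau}-\vv_{I,\tau} = \nabla_\Gamma\psi
\]
for some $\psi$. The left-hand side lies in $\mathcal{V}_\Gamma$, because $\vv_C$ and $\vv_I$ are restrictions of elements of $\mathcal{V}$ and their tangential traces on $\Gamma$ therefore belong to $\mathcal{V}_\Gamma$. This needs a small check: the trace on $\Gamma$ of the restriction to $\Omega_C$, respectively $\Omega_I$, of a global $\Hv(\opcurl)$ field equals its global tangential trace. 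The main obstacle is showing that $\psi$ can be taken in $\mathcal{U}_\Gamma$, so that $\nabla_\Gamma\psi \in \nabla\mathcal{U}_\Gamma$. In the continuous case $\mathcal{U}_\Gamma = H^{1/2}(\Gamma)_{/\const}$ and this is automatic. In the discrete case I would argue through (i) of Definition~\ref{def:compatible}: the difference lies in $\mathcal{H}_\Gamma$ and is surface-curl free, and I would use the discrete exact sequence on $\Gamma$ to conclude that curl-free elements of $\mathcal{H}_\Gamma$ are gradients of $\mathcal{U}_\Gamma$ on a simply connected $\Gamma$. If this is not available in that generality, it must be taken as the natural meaning of a compatible discrete splitting. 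Granting it, the difference lies in $\nabla\mathcal{U}_\Gamma\cap\mathcal{V}_\Gamma=\{0\}$ by the directness in (i) of Definition~\ref{def:compatible}, which yields (ii).

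\emph{(ii) $\Rightarrow$ (iii).} If the tangential traces agree, $\vv$ belongs to $\Hv(\opcurl,\Omega)$. In the discrete case the interface DOFs match, so $\vv\in\mathcal{H}$. Decompose $\vv = \nabla u + \wv$ with $u\in\mathcal{U}$ and $\wv\in\mathcal{V}$. Restricting to $\Omega_C$ gives $\vv_C = \nabla u_{|\Omega_C} + \wv_{|\Omega_C}$, and since $\vv_C \in \mathcal{V}_C$, the directness in (ii) of Definition~\ref{def:compatible} forces $\nabla u_{|\Omega_C}=0$. By the same argument using (iii) of Definition~\ref{def:compatible}, $\nabla u_{|\Omega_I}=0$. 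Hence $u$ is piecewise constant; since $u\in H^1(\Omega)$ and $\Omega$ is connected, $u$ is constant and $\nabla u = 0$. Therefore $\vv=\wv\in\mathcal{V}$.
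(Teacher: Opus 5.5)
Your proposal is correct and follows essentially the same route as the paper: (i)$\Leftrightarrow$(ii) via Lemma~\ref{lem:pwHCurl} plus the directness of the trace splitting in Definition~\ref{def:compatible}(i), and (ii)$\Leftrightarrow$(iii) via gluing plus the directness of the subdomain splittings in Definition~\ref{def:compatible}(ii)--(iii). Your write-up is more explicit than the paper's, which asserts without argument that $\mathcal{V}_\Gamma$ contains no surface gradients (thereby assuming the discrete surface exactness you flag) and calls (ii)$\Leftrightarrow$(iii) well known; like you, the paper relies on $\Gamma$ being simply connected and, in the discrete case, on piecewise elements of $\mathcal{H}_C\times\mathcal{H}_I$ with matching tangential traces belonging to $\mathcal{H}$.
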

\begin{proof}
  The equivalence between (i) and (ii) follows from Lemma~\ref{lem:pwHCurl}
	and the fact that there are no surface gradients in $\mathcal{V}_\Gamma$.
	The equivalence between (ii) and (iii) is a well-known fact and holds even for piecewise $\Hv(\opcurl)$ functions.
\end{proof}

\begin{corollary}
\label{cor:curl}
  Let $\Omega_C$, $\Omega_I$, and $\Gamma$ meet the assumptions stated in Sect.~\ref{sect:geoAss}
  and let $\Hv(\opcurl, \Omega) \supseteq \mathcal{H} = \nabla \mathcal{U} \oplus \mathcal{V}$
  be a direct splitting, compatible in the sense of Definition~\ref{def:compatible}.
  Let $\vv \in \Lv^2(\Omega)$ be given by $\vv_{|\Omega_C} = \vv_C = \nabla \varphi_C + \zv_C$ and 
	$\vv_{|\Omega_I} = \vv_I = \zv_I + \nabla \varphi_I$.
	Then the following two conditions are equivalent:
	\begin{enumerate}
	\item[(i)]
	  $\widetilde{\opcurl}\, \vv \in \opcurl \Hv(\opcurl,\Omega) \subset \Hv(\opdiv, \Omega)$,
	\item[(ii)]
	  $\zv_{C,\tau} - \zv_{I,\tau} = 0$ on $\Gamma$.
	\end{enumerate}
\end{corollary}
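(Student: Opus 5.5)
The key observation is that the gradient parts contribute nothing to the piecewise curl. Presumably $\varphi_C \in \mathcal{U}_C$, $\zv_C \in \mathcal{V}_C$, $\varphi_I \in \mathcal{U}_I$ and $\zv_I \in \mathcal{V}_I$, as guaranteed by the compatible splittings (ii) and (iii) of Definition~\ref{def:compatible}. Since $\opcurl \nabla \varphi_C = 0$ in $\Omega_C$ and $\opcurl\nabla\varphi_I = 0$ in $\Omega_I$, we get $\widetilde{\opcurl}\,\vv = \widetilde{\opcurl}\,\zv$. Here $\zv \in \Lv^2(\Omega)$ is defined piecewise by $\zv_{|\Omega_C} = \zv_C$ and $\zv_{|\Omega_I} = \zv_I$. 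Condition (i) for $\vv$ is therefore identical to condition (i) of Lemma~\ref{lem:compatSplit} applied to $\zv$.

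Since $\zv_C \in \mathcal{V}_C$ and $\zv_I \in \mathcal{V}_I$, Lemma~\ref{lem:compatSplit} applies to $\zv$ verbatim. It gives that $\widetilde{\opcurl}\,\zv \in \opcurl\Hv(\opcurl,\Omega)$ if and only if $\zv_{C,\tau} - \zv_{I,\tau} = 0$ on $\Gamma$, which is exactly (ii). This completes the equivalence.

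The only point needing care is the step $\widetilde{\opcurl}\,\vv = \widetilde{\opcurl}\,\zv$. It must be justified in the distributional sense on each open subdomain separately, using $\varphi_C \in H^1(\Omega_C)$ and $\varphi_I \in H^1(\Omega_I)$; in the discrete case the same conclusion follows from property (ii) of the discrete setting restricted to the subdomains. No interface condition on $\varphi_C$ and $\varphi_I$ is required, since the gradients are never glued across $\Gamma$. I do not expect a real obstacle: the substantive content, namely that $\mathcal{V}_\Gamma$ contains no surface gradients and hence $\oprot_\Gamma$ is injective on differences of traces, is already contained in Lemma~\ref{lem:compatSplit} via the compatibility condition (i).
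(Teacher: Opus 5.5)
Your proposal is correct and follows the paper's own argument: define $\zv$ piecewise by $\zv_C$ and $\zv_I$, use $\opcurl\nabla = 0$ on each subdomain to get $\widetilde{\opcurl}\,\vv = \widetilde{\opcurl}\,\zv$, and then apply Lemma~\ref{lem:compatSplit} to $\zv$. You also make explicit the implicit assumption that $\zv_C \in \mathcal{V}_C$ and $\zv_I \in \mathcal{V}_I$, and this assumption is exactly what makes that lemma applicable.
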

\begin{proof}
  We only need to apply Lemma~\ref{lem:compatSplit} to $\zv \in \Lv^2(\Omega)$ defined piecewise by $\zv_C$ and $\zv_I$
	and use that $\opcurl\nabla = 0$ on each subdomain.
\end{proof}

The above result shows that if we use the splitting of Definition~\ref{def:compatible} and couple only the traces of the $\mathcal{V}$-component,
but leave the gradient component discontinuous, we still have $\opcurl \vv$ globally in $\Hv(\opdiv)$.
So if $\vv = \Av$ is the magnetic vector potential, the corresponding $\Bv$-field will be continuous across the subdomain interface $\Gamma$.

\begin{corollary}
\label{cor:X}
  Let $\Hv(\opcurl, \Omega) \supseteq \mathcal{H} = \nabla \mathcal{U} \oplus \mathcal{V}$ be a direct splitting, compatible in the sense of Definition~\ref{def:compatible}
	and define
	\[
	  \mathcal{X} := \mathcal{V}_C \times \mathcal{V}_I
	\]
	which, equipped with the norm $\| (\vv_C, \vv_I) \} := \big( \| \vv_C \|_{\Hv(\opcurl, \Omega_C)}^2 + \| \vv_I \|_{\Hv(\opcurl,\Omega_I)}^2 \big)^{1/2}$,
	is a Hilbert space.
  Then $\mathcal{V}$ is isomorphic to the subspace
  \[
	  \widehat{\mathcal{X}} := \left\{ (\vv_C, \vv_I) \in \mathcal{X} \colon \vv_{C,\tau} - \vv_{I,\tau} = 0 \text{ on } \Gamma \right\}.
	\]
	More precisely, if
	\[
	  R \colon \mathcal{V} \to \mathcal{X} \colon \vv \mapsto (\vv_{|\Omega_C}, \vv_{|\Omega_I})
	\]
	denotes the `tearing' operator, then
	\begin{enumerate}
	\item[(i)] $R$ is injective,
	\item[(ii)] $\range(R) = \widehat{\mathcal{X}}$,
	\item[(iii)] $\range(R)$ is closed,
	\end{enumerate}
	from which it follows that $R$ is a bijection between $\mathcal{V}$ and $\widehat{\mathcal{X}}$.
	
	If we define the jump operator
	\[
	  B \colon \mathcal{X} \to \mathcal{V}_\Gamma \colon (\vv_C, \vv_I) \mapsto \vv_{C,\tau} - \vv_{I,\tau}
	\]
	then
	\[
	  \range(R) = \ker(B), \qquad \ker(R^T) = \range(B^T).
	\]
\end{corollary}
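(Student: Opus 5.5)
We prove (i)--(iii) in order, then deduce the bijection and the range/kernel identities by standard Hilbert space arguments.

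\emph{Well-definedness and injectivity.} We first check that $R$ maps into $\mathcal{X}$: for $\vv\in\mathcal{V}$ the restrictions $\vv_{|\Omega_C}$ and $\vv_{|\Omega_I}$ lie in $\mathcal{V}_C$ and $\mathcal{V}_I$ by the very definition of these spaces in Definition~\ref{def:compatible}. The space $\mathcal{X}$ is Hilbert, since $\mathcal{V}_C$ and $\mathcal{V}_I$ are closed in $\Hv(\opcurl,\Omega_C)$ and $\Hv(\opcurl,\Omega_I)$. In the discrete case this is trivial; in the continuous case it follows because $\nabla\mathcal{U}_C\oplus\mathcal{V}_C$ is a topological direct sum of a closed subspace and its complement. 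Injectivity of $R$ is immediate: if both restrictions vanish, then $\vv=0$ a.e.\ in $\Omega$.

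\emph{Range.} For $\vv\in\mathcal{V}$ we have $\vv\in\Hv(\opcurl,\Omega)$, so its tangential traces from both sides agree and $R\vv\in\widehat{\mathcal{X}}$. Conversely, take $(\vv_C,\vv_I)\in\widehat{\mathcal{X}}$ and define $\vv$ piecewise. Lemma~\ref{lem:compatSplit}, (ii)$\Rightarrow$(iii), gives $\vv\in\mathcal{V}$, and clearly $R\vv=(\vv_C,\vv_I)$. Hence $\range(R)=\widehat{\mathcal{X}}$.

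\emph{Closedness.} $R$ is bounded, since $\|R\vv\|_{\mathcal{X}}=\|\vv\|_{\Hv(\opcurl,\Omega)}$. In fact $R$ is an isometry with respect to the $\Hv(\opcurl)$ norms, because the curl of a globally $\Hv(\opcurl)$ field has no singular part on $\Gamma$. The range of an isometry from the closed, hence complete, space $\mathcal{V}$ is complete and therefore closed. Alternatively, $\widehat{\mathcal{X}}=\ker B$ with $B$ continuous, by continuity of the tangential trace into $\Hv(\opcurl_\Gamma,\Gamma)$ (see \cite{BuffaCostabelSheen:2002}); this shows closedness directly. By the open mapping theorem, $R\colon\mathcal{V}\to\widehat{\mathcal{X}}$ is then a bijection with bounded inverse.

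\emph{Range/kernel identities.} We have $\range(R)=\widehat{\mathcal{X}}=\ker(B)$ by definition of $B$; here $B$ maps into $\mathcal{V}_\Gamma$ because each trace lies in $\mathcal{V}_\Gamma$, using compatibility condition (i) and the fact that restriction followed by trace commutes. Taking annihilators in $\mathcal{X}^*$ gives $\ker(R^T)=\range(R)^\circ=\ker(B)^\circ$. By the closed range theorem, $\ker(B)^\circ=\overline{\range(B^T)}$, and this equals $\range(B^T)$ exactly when $\range(B)$ is closed.

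\emph{Main obstacle.} The delicate step is showing that $B\colon\mathcal{X}\to\mathcal{V}_\Gamma$ has closed range; in the discrete case this is automatic. We would show $B$ is surjective onto $\mathcal{V}_\Gamma$, equipped with the trace-space norm. Given $\boldsymbol{\lambda}\in\mathcal{V}_\Gamma$, choose $\vv\in\mathcal{V}$ with $\vv_\tau=\boldsymbol{\lambda}$, using a bounded right inverse of the trace composed with the bounded projection onto $\mathcal{V}$ from the direct splitting. Projecting the lifted trace onto $\mathcal{V}$ preserves $\boldsymbol{\lambda}$ by compatibility condition (i), since the gradient part contributes only surface gradients, which must vanish. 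Then $(\vv_{|\Omega_C},0)$ is mapped by $B$ to $\boldsymbol{\lambda}$, so $B$ is onto and its range is closed.
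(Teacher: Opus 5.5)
Your proposal is correct and follows essentially the same route as the paper. Both arguments get injectivity from vanishing restrictions and $\range(R)=\widehat{\mathcal{X}}$ from Lemma~\ref{lem:compatSplit}, closedness from the bounded traces, $\ker(R^T)=\range(B^T)$ from the annihilator/closed-range argument, and surjectivity of $B$ onto $\mathcal{V}_\Gamma$ by lifting a trace into $\mathcal{V}_C$ and pairing it with zero on $\Omega_I$; you go slightly beyond the paper by deriving the bounded lift (right inverse of the trace followed by the projection onto $\mathcal{V}$, with directness in compatibility condition~(i) ensuring the trace is preserved) and by adding the isometry argument for~(iii), both of which the paper only asserts or omits.
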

\begin{proof}
  Property~(i) follows from the fact that if $R \vv = 0$,
	which means that the restriction of $\vv \in \mathcal{V}$ to $\Omega_C$ and to $\Omega_I$ vanish,
	then $\vv = 0$.
	Property~(ii) follows directly from Lemma~\ref{lem:compatSplit}.
	Since the trace operators $\vv_C \mapsto \vv_{C,\tau}$ and $\vv_I \mapsto \vv_{I,\tau}$ are bounded,
	on can see that $\widehat{\mathcal{X}}$ is a closed subspace of $\mathcal{X}$,
	so by Property~(ii), $\range(R)$ is closed.
	The relation $\range(R) = \ker(B)$ is then obvious.
	
	By \cite[Lemma~2.10, Lemma~2.11]{McLean:Book} it follows that $\ker(R^T) = \overline{\range(B)}$, so it remains to show that $\range(B)$ is closed
	(unless all involved spaces are finite-dimensional).
	Firstly, observe that the tangential trace operator from $\mathcal{V}_C$ to $\mathcal{V}_\Gamma$ is surjective and has a bounded right-inverse,
	in particular, it has a closed range.
	The same holds for the tangential trace operator from $\mathcal{V}_I$ to $\mathcal{V}_\Gamma$.
	Secondly, since $B$ simply evaluates the difference of these two traces, it is straightforward to see that
	$\range(B) = \mathcal{V}_\Gamma$: given a function in $\wv_\Gamma \in \mathcal{V}_\Gamma$
	there exists an extension $\wv_C \in \mathcal{V}_C$ such that $\wv_{C,\tau} = \wv_\Gamma$. Setting $\wv_I = 0$,
	we have $B(\wv_C, \wv_I) = \wv_\Gamma$.
	It is then easy to see that $\range(B) = \mathcal{V}_\Gamma$ is closed as well.
	For more details, please refer to \cite[Sect.~4]{Pechstein:2023}.
\end{proof}

\section{Tearing and Interconnecting Formulation}

Let $\Hv(\opcurl,\Omega) \supseteq \mathcal{H} = \nabla \mathcal{U} \oplus \mathcal{V}$
be a splitting, compatible with ($\Omega_C$, $\Omega_I$, $\Gamma$) in the sense of Definition~\ref{def:compatible},
and let $\mathcal{U}_C$, $\mathcal{V}_C$, $\mathcal{V}_I$, and $\mathcal{V}_\Gamma$ be as in that definition.

Let $K_I \colon \mathcal{V}_I \to \mathcal{V}_I^*$ denote the operator
\begin{align}
  \langle K_I \vv_I, \wv_I \rangle := \int_{\Omega_I} \mu^{-1} \opcurl \vv_I \cdot \opcurl \wv_I \, dx \qquad \text{for } \vv_I, \wv_I \in \mathcal{V}_I\,,
\end{align}
and let $K_C \colon \mathcal{V}_C \to \mathcal{V}_C^*$ and $M_C \colon \mathcal{V}_C \to \mathcal{V}_C^*$ be defined by
\begin{align}
  \langle K_C \vv_C, \wv_C \rangle & := \int_{\Omega_C} \mu^{-1} \opcurl \vv_C \cdot \opcurl \wv_C \, dx,\\
	\langle M_C \vv_C, \wv_C \rangle & := \int_{\Omega_C} \sigma \vv_C \cdot \wv_C \, dx,
\end{align}
for $\vv_I$, $\wv_I \in \mathcal{V}_I$.
Moreover, let $S_C \colon \mathcal{V}_C \times \mathcal{U}_C^*$ be defined by
\begin{align}
  \langle S_C \vv_C, q \rangle := \int_{\Omega_C} \sigma \vv_C \cdot \nabla q \, dx \qquad \forall \vv_C \in \mathcal{V}_C, q \in \mathcal{U}_C\,,
\end{align}
and $C_C \colon \mathcal{U}_C \to \mathcal{U}_C^*$ by
\begin{align}
  \langle C_C p, q \rangle := \int_{\Omega_C} \sigma \nabla p \cdot \nabla q \, dx
	\qquad \forall p, q \in \mathcal{U}_C\,.
\end{align}
For the right-hand side, we need more assumptions.
Let $J_C \in \mathcal{V}_C^*$ and $J_I \in \mathcal{V}_I^*$ be current distributions such that
\begin{align}
  \langle J, \vv \rangle_\Omega = \langle J_C, \vv_{|\Omega_C} \rangle_{\Omega_C} + \langle J_I, \vv_{|\Omega_I} \rangle_{\Omega_I}
	\qquad \forall \vv \in \mathcal{V}.
\end{align}
Recall the assumption~\eqref{eq:JsoleniodalI} (or \eqref{eq:JsoleniodalII}) that $\Jv$ is solenoidal outside the conducting domain,
\begin{align}
  \langle \Jv, \nabla q \rangle_\Omega = 0 \qquad \forall q \in \mathcal{U} \text{ with } q_{|\overline{\Omega_C}} = 0.
\end{align}
Suppose, for a moment that $\Jv$ restricted to $\nabla\mathcal{U}$ is assembled from two components $j_C \in \mathcal{U}_C^*$ and $j_I \in \mathcal{U}_I^*$,
such that $\langle \Jv, \nabla q \rangle_\Omega = \langle j_C, q \rangle_{\Omega_C} + \langle j_I, q \rangle_{\Omega_I}$.
Then we can derive from the above that $j_I$ is zero of tested with $q \in \mathcal{U}_I \cap H^1_0(\Omega_C)$ but $j_I$ could very well act on $\Gamma$.
To avoid complications, we refrain from using a separate functional $j_I$ but simply use the functional $j$ defined already in \eqref{eq:jDef}:
\begin{align}
  j \in U_C^* \colon \quad \langle j, q \rangle := \langle \Jv, \nabla(E q) \rangle_\Omega \qquad q \in \mathcal{U}_C
\end{align}
(this definition is independent of the particular extension operator $E \colon \mathcal{U}_C \to \mathcal{U}$).
To summarize, 
\begin{enumerate}
\item[(i)] the complementary component of $\Jv$ is `distributed' to $\Jv_I$ and $\Jv_C$, and this can be quite arbitrary.
\item[(i)] the gradient component of $\Jv$ is \emph{not} distributed but stays with the linear functional $j$, associated with the conducting domain.
\end{enumerate}

\medskip

The tearing and interconnecting formulation reads as follows:

Find $(\Av_I, \Av_C, \varphi, \lambdav) \in \mathcal{V}_I \times \mathcal{V}_C \times \mathcal{U}_C \times \mathcal{V}_\Gamma^*$ such that
\begin{align}
\label{eq:FETI}
  \begin{bmatrix}
	  K_I & 0                         & 0     & B_I^T \\
		0   & K_C + \complexi\omega M_C & S_C^T & B_C^T \\
		0   & \complexi\omega S_C       & C_C   & 0 \\
		B_I & B_C                       & 0     & 0 
	\end{bmatrix}
	\begin{bmatrix} \Av_I \\ \Av_C \\ \varphi \\ \lambdav \end{bmatrix}
	= \begin{bmatrix} J_I \\ J_C \\ j \\ 0 \end{bmatrix},
\end{align}
where
\begin{align}
\begin{aligned}
  B_C \colon \mathcal{V}_C \to \mathcal{V}_\Gamma & \colon \vv_C \mapsto \vv_{C,\tau}\,,\\
  B_I \colon \mathcal{V}_I \to \mathcal{V}_\Gamma & \colon \vv_I \mapsto -\vv_{I,\tau}\,.
\end{aligned}
\end{align}

\begin{theorem}
  Let the geometric assumptions from Sect.~\ref{sect:geoAss} hold and let
  $\Hv(\opcurl,\Omega) \supseteq \mathcal{H} = \nabla \mathcal{U} \oplus \mathcal{V}$
	be a direct splitting that is compatible with $(\Omega_C$, $\Omega_I$, $\Gamma)$ in the sense of Definition~\ref{def:compatible}.
  Then formulations~\eqref{eq:MQS:operator} and \eqref{eq:FETI} are equivalent.
\end{theorem}
\begin{proof}
  Any solution of~\eqref{eq:FETI} fulfills $B_I \Av + B_C \Av = 0$, so $(\Av_I, \Av_C) \in \widehat{\Xv}$
  and according to Corollary~\ref{cor:X} there exists a unique function $\Av \in \mathcal{V}$ such that
  $\Av_I = \Av_{|\Omega_I}$ and $\Av_C = \Av_{|\Omega_C}$.
  Once this is established and one tests the formulation with $(\vv_{|\Omega_C}, \vv_{|\Omega_I})$ for $\vv \in \mathcal{V}$,
  the Lagrange multiplier terms cancel out and we are left with \eqref{eq:MQS:operator}.

  Let $(\Av, \varphi)$ be a solution of \eqref{eq:MQS:operator} and let $R \colon \mathcal{V} \to \mathcal{X}$
	be the tearing operator from Corollary~\ref{cor:X}.
	Then, by the stated assumptions, we have the relations
	\begin{align}
	\begin{alignedat}{2}
	  K & = R^T \begin{bmatrix} K_C & 0 \\ 0 & K_I \end{bmatrix} R, & \qquad
	  M & = R^T \begin{bmatrix} M_C & 0 \\ 0 & 0 \end{bmatrix} R, \\
		S & = \begin{bmatrix} S_C & 0 \end{bmatrix} R, & \qquad
		J & = R^T \begin{bmatrix} J_C \\ J_I \end{bmatrix}.
	\end{alignedat}
	\end{align}
	With that, we can reformulate~\eqref{eq:MQS:operator} as
	\begin{align}
	  R^T \left( \begin{bmatrix} J_C \\ J_I \end{bmatrix}
		  - \begin{bmatrix} K_C + \complexi\omega M_C & 0 \\ 0 & K_I \end{bmatrix} R \Av - \begin{bmatrix} S_C^T \\ 0 \end{bmatrix} \varphi \right)
		& = 0,\\
		\begin{bmatrix} S_C & 0 \end{bmatrix} R \Av + C \varphi & = j.
	\end{align}
	We define $(\Av_C, \Av_I) \in \mathcal{X}$ by
	\begin{align}
	  \begin{bmatrix} \Av_C \\ \Av_I \end{bmatrix} := R \Av
	\end{align}
	and $(\tv_C, \tv_I) \in \mathcal{X}^*$ by
	\begin{align}
	  \begin{bmatrix} \tv_C \\ \tv_I \end{bmatrix}
		= \begin{bmatrix} J_C \\ J_I \end{bmatrix}
		  - \begin{bmatrix} K_C + \complexi\omega M_C & 0 \\ 0 & K_I \end{bmatrix} R \Av - \begin{bmatrix} S_C^T \\ 0 \end{bmatrix} \varphi.
	\end{align}
	Therefore, we have
	\begin{align}
	  R^T \begin{bmatrix} \tv_C \\ \tv_I \end{bmatrix} & = 0,\\
	  \begin{bmatrix}
	    K_I & 0                         & 0     \\
	  	0   & K_C + \complexi\omega M_C & S_C^T \\
  		0   & \complexi\omega S_C       & C_C     
  	\end{bmatrix}
	  \begin{bmatrix} \Av_I \\ \Av_C \\ \varphi \end{bmatrix}
	  + \begin{bmatrix} \tv_I \\ \tv_C \\ 0 \end{bmatrix}
	  & = \begin{bmatrix} J_I \\ J_C \\ j \end{bmatrix},\\
	\label{eq:ACIrangeR}
		\begin{bmatrix} \Av_C \\ \Av_I \end{bmatrix} & \in \range(R).
  \end{align}
	By Corollary~\ref{cor:X}, $\range(R) = \ker(B)$ and $\ker(R^T) = \range(B^T)$.
	Therefore, we can find $\lambdav \in \mathcal{V}_\Gamma^*$ depending continuously on $(\tv_C, \tv_I$)
	such that
	\begin{align}
	  \begin{bmatrix} \tv_C \\ \tv_I \end{bmatrix} = B^T \lambdav = \begin{bmatrix} B_C^T \\ B_I^T \end{bmatrix} \lambdav,
	\end{align}
	and the condition~\eqref{eq:ACIrangeR} is equivalent to
	\begin{align}
	  B \begin{bmatrix} \Av_C \\ \Av_I \end{bmatrix}
		= \begin{bmatrix} B_C & B_I \end{bmatrix} \begin{bmatrix} \Av_C \\ \Av_I \end{bmatrix}
		= 0.
	\end{align}
	It is now easy to see that $(\Av_I, \Av_C, \varphi, \lambdav)$ solves \eqref{eq:FETI}.
	For more background information see also \cite[Sect.~4]{Pechstein:2023}.
\end{proof}

This note is concluded with the following remarks.
\begin{enumerate}
\item[(i)]
  Since the subspace $\mathcal{V}_I$ does not contain any gradient fields and $\Omega_I$ is simply connected,
  the local subdomain operator $K_I$ is invertible.
\item[(ii)]
  If $(\Av_I, \Av_C)$ is a solution of~\eqref{eq:FETI}, then according to Corollary~\ref{cor:curl},
  the vector field $\Bv \in \Lv^2(\Omega)$ defined by
  \[
    \Bv(\xv) := \begin{cases} 
	              (\opcurl\Av_I)(\xv) & \xv \in \Omega_I, \\
								(\opcurl\Av_C)(\xv) & \xv \in \Omega_C
							\end{cases}
  \]
  is globally in $\Hv(\opdiv,\Omega)$.
\end{enumerate}

\bibliographystyle{abbrv}
\bibliography{IETI-EddyCurrent}

\end{document}